\font\Goth=yinitas scaled \magstep0
\newcommand{\Gth}[1]{\lower2mm\hbox{\Goth #1}}
\def\al{\alpha}
\def\de{\delta}
\def\be{\beta}
\def\l1{{\lambda}_1}
\newcommand{\f}{\frac}
\def\x1{{\xi }_{xx}}
\def\x2{{\xi }_{yy}}
\def\x3{{\xi }_{xy}}
\def\e1{{\eta }_{xx}}
\def\e2{{\eta }_{yy}}
\def\e3{{\eta }_{xy}}
\newcommand{\ds}{\displaystyle }
\newtheorem{definition}{Definition}
\newtheorem{theorem}{Theorem}
\newcommand{\beqn}{\begin{eqnarray*}}
\newcommand{\eeqn}{\end{eqnarray*}}
\newcommand{\beqnn}{\begin{eqnarray}}
\newcommand{\eeqnn}{\end{eqnarray}}
\newcommand{\p}{\partial}
\newcommand{\bb}{\begin{equation}}
\newcommand{\ee}{\end{equation}}
\newcommand{\ba}{\begin{array}}
\newcommand{\ea}{\end{array}}
\newcommand{\R}{\mathbb{R}}
\newcommand{\N}{\mathbb{N}}
\begin{document}
\pagenumbering{arabic}
\title{\huge \bf Self-adjoint sub-classes of third and fourth-order evolution equations}
\author{\rm \large Igor Leite Freire \\
\\
\it Centro de Matemática, Computação e Cognição\\ \it Universidade Federal do ABC - UFABC\\ \it 
Rua Catequese, $242$,
Jardim,
$09090-400$\\\it Santo André, SP - Brasil\\
\rm E-mail: igor.freire@ufabc.edu.br\\}
\date{\ }
\maketitle
\vspace{1cm}
\begin{abstract}
In this work a class of self-adjoint quasilinear third-order evolution equations is determined. Some conservation laws of them are established and a generalization on a self-adjoint class of fourth-order evolution equations is presented.
\end{abstract}
\vskip 1cm
\begin{center}
{2000 AMS Mathematics Classification numbers:\vspace{0.2cm}\\
76M60, 58J70, 35A30, 70G65\vspace{0.2cm} \\
Key words: Evolution equations, self-adjoint equation, conservation laws for evolution equations}
\end{center}
\pagenumbering{arabic}
\newpage

\section{Introduction}

In this paper we consider the problem on self-adjointness condition of equation
\bb\label{1.1.1}
u_{t}=r(u)u_{xxx}+p(u)u_{xx}+q(u)u_{x}^{2}+a(u)u_{x}+b(u),
\ee
where $r,\,p,\,q,\,a,\,b:\R\rightarrow\R$ are arbitrary smooth functions. 

Equation (\ref{1.1.1}) includes important evolution equations employed in mathematical physics and in mathematical biology, for instance, inviscid Burgers equation, Burgers equation, potential Burgers equation, Fisher equation, Korteweg--de Vries (KdV) equation, Gardner equation and so on, see \cite{cher, gun, igor1}. It can be used to describe shallow watter waves, collisionless-plasma magnetohydrodynamics waves and ion acoustic waves, among other physical or biological phenomena, see also \cite{ott, zabu}.

Similar work has been performed by Bruzón, Gandarias and Ibragimov \cite{ib1} regarding equation
\bb\label{1.1.3}
u_{t}+f(u)u_{xxxx}+g(u)u_{x}u_{xxx}+h(u)u_{xx}^{2}+d(u)u_{x}^{2}u_{xx}-p(u)u_{xx}-q(u)u_{x}^{2}=0.
\ee

However, in (\ref{1.1.3}) source terms and nonlinearities type $a(u)u_{x}$ and $r(u)u_{xxx}$ were not taken into account. So we shall complement the results previously obtained by them including these terms. 

Ibragimov \cite{ib2} has recently established a new conservation theorem for equations without Lagrangians. If (\ref{1.1.1}) is self-adjoint it is possible to construct conservation laws $D_{t}C^{0}+D_{x}C^{1}=0$ for it, where the components $C^{0}$ and $C^{1}$ depend on $t,x,u$ and its derivatives.

The purpose of this paper is to determine the self-adjoint equations type (\ref{1.1.1}) and, by using the recent result \cite{ib2}, establish some nontrivial conservation laws for some of these equations. The results on self-adjointness condition of equation (\ref{1.1.3}) obtained in \cite{ib1} is also generalized by including dispersive, convective and source terms.

The paper is organized as the follows: in the section \ref{review} we revisit some results regarding Lie point symmetries and conservation laws for differential equations. Section \ref{self} is devoted to find the self-adjoint equations type (\ref{1.1.1}). We comment some results presented in \cite{ib1} in section \ref{comment}.  

\section{Preliminaries}\label{review}

This section contains a brief discussion on the space of differential functions ${\cal A}$, Lie-Bäcklund operators, self-adjoint equations and conservation laws for differential equations. For more details, see \cite{ib0, ib1, ib2, ib3}. In the following the summation over repeated indices is understood.

Let $x=(x^{1},\cdots,x^{n})$  be $n$ independent variables and $u=(u^{1},\cdots,u^{m})$ be $m$ dependent variables with partial derivatives $u^{\al}_{i}=\f{\p u^{\al}}{\p x^{i}},\,u^{\al}_{ij}=\f{\p^{2}u^{\al}}{\p x^{i}\p x^{j}}$, etc. The total differentiation operators are given by
$$
D_{i}=\f{\p}{ \p x^{i}}+u^{\al}_{i}\f{\p}{\p u^{\al}}+u^{\al}_{ij}\f{\p}{\p u^{\al}_{j}}+\cdots,\,\,i=1,\cdots,n,\,\al=1,\cdots,m.
$$

Observe that $u^{\al}_{i}=D_{i}u^{\al},\,u^{\al}_{ij}=D_{i}D_{j}u^{\al}$, etc. The variables $u^{\al}$ with the sucessive derivatives $u^{\al}_{i_{1}\cdots i_{k}},\,k\in\N$, is known as the differential variables.

\begin{definition}
A locally analytic function of a finite number of the variables $x,\,u$ and $u$ derivatives is called a differential function. The highest order of derivatives appearing in the differential function is called the order of this function. The vector space of all differential functions of finite order is denoted by ${\cal A}$.
\end{definition}

{\bf Example}: Let us consider the differential function
\bb\label{2.1.1}
F=u_{t}-r(u)u_{xxx}-p(u)u_{xx}-q(u)u_{x}^{2}-a(u)u_{x}-b(u),
\ee
where $p,\,q,\,r,\,a,\,b:\R\rightarrow\R$ are arbitrary smooth functions. Supposing $r(u)\neq0$, the order of $F$ is three.

\begin{definition}
A Lie-Bäcklund operator is a formal sum
\bb\label{2.1.2}
X=\xi^{i}\f{\p}{\p x^{i}}+\eta^{\al}\f{\p}{\p u^{\al}}+\eta^{\al}_{i}\f{\p}{\p u^{\al}_{i}}+\eta^{\al}_{ij}\f{\p}{\p u^{\al}_{ij}}+\cdots,
\ee
where $\xi^{i},\,\eta^{\al}\in{\cal A},\, \eta^{\al}_{i}=D_{i}(\eta^{\al}-\xi^{j}u^{\al}_{j})+\xi^{j}u^{\al}_{ij},\,\eta^{\al}_{ij}=D_{i}D_{j}(\eta^{\al}-\xi^{k}u^{\al}_{k})+\xi^{k}u^{\al}_{kij}$, etc. 

The Lie-Bäcklund operator is often written as 
\bb\label{2.1.3}
X=\xi^{i}\f{\p}{\p x^{i}}+\eta^{\al}\f{\p}{\p u^{\al}}
\ee
understanding the prolonged form $(\ref{2.1.2})$. If $\xi^{i}=\xi^{i}(x,u)$ and $\eta=\eta(x,u)$ in $(\ref{2.1.3})$, then $X$ is a generator of Lie point symmetry group.
\end{definition}

{\bf Example:} The field 
\bb\label{4.1.1'}
X=t\f{\p}{\p t}-u\f{\p }{\p u}
\ee
is a Lie point symmetry generator of inviscid Burgers equation
\bb\label{4.1.1}
u_{t}=uu_{x}.
\ee

The set of all Lie-Bäcklund operators endowed with the commutator
$$
[X,Y]=(X(\zeta^{i})-Y(\xi^{i}))\f{\p}{\p x^{i}}+(X(\omega^{\al})-Y(\eta^{\al}))\f{\p}{\p u^{\al}}+\cdots,
$$
where $X$ is given by $(\ref{2.1.3})$ and 
$$Y=\zeta^{i}\f{\p}{\p x^{i}}+\omega^{\al}\f{\p}{\p u^{\al}},$$
is an infinite-dimensional Lie algebra.

\begin{definition}
The Euler-Lagrange operator $\f{\de}{\de u^{\al}}:{\cal A}\rightarrow{\cal A}$ is defined by the formal sum
\bb\label{2.1.4}
\f{\de}{\de u^{\al}}=\f{\p}{\p u^{\al}}+\sum_{j=1}^{\infty}(-1)^{j}D_{i_{1}}\cdots D_{i_{j}}\f{\p}{\p u^{\al}_{i_{1}\cdots i_{j}}}.
\ee
\end{definition}

\begin{definition}
Let $F_{\al}\in{\cal A}$. We define the adjoint system of differential functions $F_{\al}^{\ast}$ to $F_{\al}$ by the expression
$$
F^{\ast}_{\al}=\f{\de}{\de u^{\al}}(v^{\be}F_{\be}),
$$
where $v^{\be}$ is a new dependent variable. We say that $F_{\al}^{\ast}$ is a self-adjoint system of differential functions to $F_{\al}$ if there exists $\phi\in{\cal A}$ such that
$$
\left.F^{\ast}_{\al}\right|_{v=u}=\phi F_{\al}.
$$
\end{definition}

We observe that a system of differential equations can be viewed as $F_{\al}=0$, for some $F_{\al}\in{\cal A}$. 

\begin{definition}
An adjoint system of differential equations $F^{\ast}_{\al}=0$ to a system of differential equations $F_{\al}=0$ is given by
$$
F^{\ast}_{\al}=\f{\de}{\de u^{\al}}(v^{\be}F_{\be})=0,
$$
where $v^{\be}$ is a new dependent variables. We say that $F^{\ast}_{\al}=0$ is a self-adjoint equation to $F_{\al}=0$ if
\bb\label{ast}
\left.F^{\ast}_{\al}\right|_{v=u}=\phi F_{\al},
\ee
for some differential function $\phi\in{\cal A}$. So $\left.F^{\ast}_{\al}\right|_{v=u}=0$ if and only if $F_{\al}=0$.
\end{definition}

{\bf Example}: The adjoint differential function $F^{\ast}$ to (\ref{2.1.1}) is 
$$F^{\ast}=\f{\de}{\de u}(vF)=v[-p'u_{xx}-q'u_{x}^{2}-a'u_{x}-b']-D_{t}(v)-D_{x}[-v(2qu_{x}+a)]+D_{x}^{2}(-vp)-D_{x}^{3}(-vr).$$
Setting $v=u$ and after a tedious calculation we obtain
\bb\label{adj}
\ba{lcl}
F^{\ast}&=&-u_{t}-ub'(u)+a(u)u_{x}+[uq'(u)-2p'(u)-up''(u)+2q(u)]u_{x}^{2}+(3r''+ur''')u_{x}^{3}\\
\\
&&+[2uq(u)-2up'(u)+2uq(u)]u_{xx}+(6r'+3ur'')u_{x}u_{xx}+ru_{xxx}.
\ea\ee

If an equation possesses variational structure, it is well known that the Noether Theorem can be employed in order to establish conservation laws for the respective equation, {\it e.g.}, see \cite{yi1, yi2, igor2, naz}. 

However, the Noether Theorem cannot be applied to evolution equations in order to obtain conservation laws, since this class of equations does not possess variational structure. Fortunately there are some other alternative methods to establish conservation laws for equations without Lagrangians, see \cite{naz}. One of them is a recent result \cite{ib2}, due to Ibragimov.

Let
\bb\label{2.1.7}
X=\tau(t,x,u)\f{\p}{\p t}+\xi(t,x,u)\f{\p}{\p x}+\eta(t,x,u)\f{\p}{\p u}
\ee
be a Lie point symmetry generator of (\ref{1.1.1}) and
\bb\label{2.1.8}
{\cal L}=v F,
\ee
where $F$ is given by $(\ref{2.1.1})$. From the new conservation theorem \cite{ib2}, the conservation law for the system given by equation $(\ref{2.1.1})$ and by its adjoint equation $F^{\ast}=0$, where $F^{\ast}$ is given by $(\ref{adj})$,
is $Div(C)=D_{t}C^{0}D_{x}C^{1}=0$, where  
\bb\label{2.1.10}
\ba{lcl}
C^{0}&=&\ds{\tau {\cal L}+W\,\f{\p {\cal L}}{\p u_{t}}},\\
\\
C^{1}&=&\ds{\xi {\cal L}+W\left[\f{\p {\cal L}}{\p u_{x}}-D_{x}\f{\p {\cal L}}{\p u_{xx}}+D_{x}^{2}\f{\p {\cal L}}{\p u_{xxx}}\right]+D_{x}(W)\left[\f{\p {\cal L}}{\p u_{xx}}-D_{x}\f{\p {\cal L}}{\p u_{xxx}}\right]+D_{x}^{2}(W)\f{\p {\cal L}}{\p u_{xxx}}}
\ea
\ee
and $W=\eta-\tau u_{t}-\xi u_{x}$.

In particular, whenever $(\ref{2.1.1})$ is self-adjoint, substituting $v=u$ into $(\ref{2.1.10})$, $C=(C^{0},C^{1})$ provides a conserved vector for $(\ref{2.1.1})$.

\section{Self-adjoint equations type $(\ref{1.1.1})$}\label{self}

\subsection{The class of self-adjoint equations type (\ref{1.1.1})}

By applying the Euler-Lagrange operator (\ref{2.1.4}) to (\ref{2.1.8}), where $F$ is given by (\ref{2.1.1}) and equating to $0$, we obtain the adjoint equation to (\ref{1.1.1}), that is, $F^{\ast}$=0, where $F^{\ast}$ is given by (\ref{adj}).

Supposing that $F$ is self-adjoint, equation (\ref{ast}) holds, for some $\phi\in{\cal A}$. Comparing the coefficient of $u_{t}$, we obtain $\phi=-1$ and 
\bb\label{3.1.1}
\ba{lcl}
3r''+ur'''=0,\,\,\,\,
3ur''+6r'=0,\,\,\,\,
ur'''+3r''=0,\\ \\
-up'-p=-uq,\,\,\,\,
uq'-2p'-up''+q=0,\,\,
\,\,
ub'=-b.
\ea
\ee

Solving the system (\ref{3.1.1}), we obtain
\bb\label{3.1.1'}
r=a_{1}+\f{a_{2}}{u},
\ee
\bb\label{3.1.2}
q=\f{(up)'}{u}
\ee
and 
\bb\label{3.1.3}
b=\f{a_{3}}{u},
\ee
where $a_{1},\,a_{2}$ and $a_{3}$ are arbitrary constants. 

The following theorem is proved.

\begin{theorem}\label{teo1}
Equation $(\ref{1.1.1})$ is self-adjoint if and only if it has the form
\bb\label{3.2.1}
u_{t}=\left(a_{1}+\f{a_{2}}{u}\right)u_{xxx}+p(u)u_{xx}+\f{(up)'}{u}u_{x}^{2}+a(u)u_{x}+\f{a_{3}}{u},
\ee
where $a_{1},\,a_{2}$ and $a_{3}$ are constants.
\end{theorem}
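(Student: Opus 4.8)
\emph{Proof plan.} The statement is an ``if and only if'' characterization of self-adjointness, and since the defining condition (\ref{ast}) must hold \emph{identically} in the jet variables, the natural strategy is to compute the adjoint $F^{\ast}=\frac{\delta}{\delta u}(vF)$ explicitly, restrict it to $v=u$, and then extract a determining system for $r,p,q,a,b$ by matching the two sides of (\ref{ast}) monomial by monomial. First I would apply the Euler--Lagrange operator (\ref{2.1.4}) to $vF$ with $F$ as in (\ref{2.1.1}); since $F$ involves only $u_{t},u_{x},u_{xx},u_{xxx}$, the operator reduces to $\frac{\partial}{\partial u}-D_{t}\frac{\partial}{\partial u_{t}}-D_{x}\frac{\partial}{\partial u_{x}}+D_{x}^{2}\frac{\partial}{\partial u_{xx}}-D_{x}^{3}\frac{\partial}{\partial u_{xxx}}$, and after setting $v=u$ one arrives at (\ref{adj}).

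The heart of the argument is the coefficient comparison. Imposing (\ref{ast}) and first matching the coefficient of $u_{t}$ forces $\phi=-1$, so that (\ref{ast}) becomes the identity $\left.F^{\ast}\right|_{v=u}=-F$. Treating $u_{t},u_{x},u_{xx},u_{xxx}$ and their products as algebraically independent, I would then equate, term by term, the coefficients of the monomials $u_{x}^{3}$, $u_{x}u_{xx}$, $u_{xx}$, $u_{x}^{2}$ and of the source term (the coefficients of $u_{xxx}$ and of $u_{x}$ match automatically and impose no condition). The $u_{x}^{3}$- and $u_{x}u_{xx}$-coefficients yield the equations $3r''+ur'''=0$ and $3ur''+6r'=0$ for $r$; the $u_{xx}$-coefficient yields $-up'-p=-uq$; the $u_{x}^{2}$-coefficient yields $uq'-2p'-up''+q=0$; and the source term yields $ub'=-b$. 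Collecting these is exactly the system (\ref{3.1.1}).

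It then remains to integrate (\ref{3.1.1}). The equation $3ur''+6r'=0$, i.e.\ $ur''+2r'=0$, integrates at once to $r=a_{1}+a_{2}/u$, which is readily seen to satisfy $3r''+ur'''=0$ as well; the relation $-up'-p=-uq$ rearranges to $uq=(up)'$, that is $q=(up)'/u$; and $ub'+b=0$ integrates to $b=a_{3}/u$. A step to check with care is that the $u_{x}^{2}$-equation $uq'-2p'-up''+q=0$ imposes \emph{no} further restriction: substituting $q=(up)'/u$ turns it into an identity, so $p$ and $a$ remain arbitrary. This gives (\ref{3.1.1'}), (\ref{3.1.2}), (\ref{3.1.3}), hence the form (\ref{3.2.1}), and establishes the forward implication. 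For the converse I would substitute these expressions for $r,q,b$ back into (\ref{adj}) and confirm $\left.F^{\ast}\right|_{v=u}=-F$, so that any equation of the form (\ref{3.2.1}) is self-adjoint.

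The main obstacle is entirely computational: the reliable evaluation of the iterated total derivatives $D_{x},D_{x}^{2},D_{x}^{3}$ acting on the products $v\,r(u)$, $v\,p(u)$, $v\,q(u)u_{x}^{2}$ and the like, followed by the careful collection of like jet monomials once $v=u$ is imposed. This is precisely the ``tedious calculation'' that produces (\ref{adj}), and any sign or product-rule slip there propagates directly into (\ref{3.1.1}). The only genuinely structural observation is the redundancy of the $u_{x}^{2}$-equation once (\ref{3.1.2}) holds, since this is what guarantees that the solution set is exactly (\ref{3.2.1}) with $p$ and $a$ free.
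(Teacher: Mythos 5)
Your proposal is correct and follows essentially the same route as the paper: compute $F^{\ast}=\frac{\delta}{\delta u}(vF)$, set $v=u$, deduce $\phi=-1$ from the $u_{t}$-coefficient, extract the determining system (\ref{3.1.1}) by matching jet monomials, and integrate it to obtain (\ref{3.1.1'}), (\ref{3.1.2}), (\ref{3.1.3}). Your explicit observation that the $u_{x}^{2}$-equation becomes an identity once $q=(up)'/u$ is imposed (so $p$ and $a$ stay arbitrary) is left implicit in the paper, but it is the same argument.
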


\subsection{Conservation laws for equations type (\ref{3.2.1})}\label{examples}

Here we shall illustrate Theorem \ref{teo1} by using it and the results due to Ibragimov in order to establish some conservation laws for self-adjoint equations type (\ref{3.2.1}).

\subsubsection{Inviscid Burgers equation} 

The vector field (\ref{4.1.1'}) is a Lie point symmetry of the inviscid Burgers equation (\ref{4.1.1})(for more details, see \cite{igor1}). Since it is a self-adjoint equation, taking $v=u$ in (\ref{2.1.10}), the conservation law $D_{t}C^{0}+D_{x}C^{1}=0$ is obtained, where
$$
C^{0}=-u^{2}-tu^{2}\,u_{x},\,\,\,\,C^{1}=u^{3}+tu^{2}\, u_{t}.
$$

However, 
$$C^{0}=-u^{2}+D_{x}\left(-\f{tu^{3}}{3}\right),\,\,\,\, C^{1}=u^{3}+tD_{t}\left(\f{u^{3}}{3}\right)$$
and 
$$
\ba{lcl}
D_{t}C^{0}+D_{x}C^{1}&=&\ds{-D_{x}\left(\f{u^{3}}{3}\right)-tD_{t}D_{x}\left(\f{u^{3}}{3}\right)-D_{t}(u^{2})+D_{x}(u^{3})+tD_{x}D_{t}\left(\f{u^{3}}{3}\right)}\\
\\
&=&\ds{D_{t}(-u^{2})+D_{x}\left(\f{2}{3}u^{3}\right)}.
\ea
$$
Then $C=(-u^{2},\f{2}{3}u^{3})$ provides a conserved vector for (\ref{4.1.1}). This conservation law was established in \cite{igor1}.

\subsubsection{Singular second order evolution equation}

A Lie point symmetry generator of singular equation
\bb\label{5.1.10}
u_{t}=\f{u_{xx}}{u}
\ee
is given by
\bb\label{5.1.11}
X=t\f{\p}{\p t}+u\f{\p}{\p u}.
\ee

From (\ref{2.1.10})
$$
\ba{lcl}
C^{0}&=&\ds{\f{tvu_{xx}}{u}+vu},\\
\\
C^{1}&=&\ds{(u-tu_{x})\left(\f{v_{x}}{u}-\f{vu_{x}}{u^{2}}\right)-\f{v}{u}D_{x}(u-tu_{t})}.
\ea
$$
Setting $v=u$ in $C^{0}$ and $C^{1}$ and after reckoning we obtain $C=(u^{2},-2u_{x})$ as a conserved vector for equation (\ref{5.1.10}).

Considering the Lie point symmetry generator
$$Y=x\f{\p}{\p x}+2t\f{\p}{\p t}$$
the components of the conserved vector is
\bb\label{5.1.1.x}
\ba{lcl}
C^{0}&=&\ds{-\f{2tv}{u}u_{xx}-xvu_{x}},\\
\\
C^{1}&=&\ds{xvu_{t}-\f{xv}{u}u_{xx}-\f{xv_{x}u_{x}}{u}+\f{xvu_{x}^{2}}{u^{2}}-\f{2tv_{x}u_{t}}{u}+\f{2tvu_{t}u_{x}}{u^{2}}+\f{v}{u}D_{x}(xu_{x}+2tu_{t})}.
\ea
\ee
After a tedious calculus and substituting $v=u$ into (\ref{5.1.1.x}) we obtain the vector $C=(u^{2}/2,-u_{x})$. Then $Y$ does not give a new conservation law for (\ref{5.1.10}).
\subsubsection{The Korteweg--de Vries equation}

Let us now consider the Korteweg--de Vries equation
\bb\label{4.2.1}
u_{t}=u_{xxx}+uu_{x}.
\ee

It is clear that
$$
X=t\f{\p}{\p t}-\f{\p}{\p u}
$$
is a Lie point symmetry generator of (\ref{4.2.1}). Since (\ref{4.2.1}) is self-adjoint, from (\ref{2.1.10}) and setting $v=u$, we obtain
$$
\ba{lcl}
C^{0}&=&\ds{-u-tuu_{x}=-u+D_{x}\left(-t\f{u^{2}}{2}\right)},\\
\\
C^{1}&=&
\ds{tuu_{t}+u_{xx}+u^{2}=tD_{t}\left(\f{u^{2}}{2}\right)+u^{2}+u_{xx}}.
\ea
$$

Then $$D_{t}C^{0}+D_{x}C^{1}=D_{t}(-u)+D_{x}\left(\f{u^{2}}{2}+u_{xx}\right),$$
we conclude that $C=(-u,\f{u^{2}}{2}+u_{xx})$ is a conserved vector for (\ref{4.2.1}). This example was presented in the seminal work \cite{ib2}.

\subsubsection{Generalized Korteweg--de Vries equation}

{\bf Example 3}: Let us consider the following generalization of the Korteweg--de Vries equation (\ref{4.2.1})
\bb\label{4.2.2}
u_{t}=u_{xxx}+u^{\mu}u_{x},
\ee
where $\mu\neq 0$ is a constant. 

Here we shall present in a more detailed form the conservation law for (\ref{4.2.2}) arising from the Lie point symmetry generator
$$
X_{\mu}=\f{2}{\mu}u\f{\p}{\p u}-3t\f{\p}{\p t}-x\f{\p}{\p x}.
$$

From (\ref{2.1.10}) is obtained
\bb\label{4.2.4}
\ba{lcl}
A^{0}&=&\ds{v \left(3tu_{xxx}+3tu^{\mu}u_{x}+xu_{x}\f{2}{\mu}u\right)},\\
\\
A^{1}&=&\ds{-v\left(\f{2}{\mu}u^{\mu+1}+xu_{t}+3tu^{\mu}u_{t}+2\f{\mu+1}{\mu}u_{xx}+3tu_{txx}\right)+v_{x}\left(\f{2+\mu}{\mu}u_{x}+3tu_{tx}+xu_{xx}\right)}\\
\\
&&\ds{-v_{xx}\left(\f{2}{\mu}u+3tu_{t}+xu_{x}\right)}.
\ea
\ee

Setting $v=u$ in (\ref{4.2.4}) and after reckoning, we obtain
$$D_{t}A^{0}+D_{x}A^{1}=D_{t}\left(\f{4-\mu}{2\mu}u^{2}\right)+D_{x}\left[\f{\mu-4}{\mu(\mu+2)}u^{\mu+2}+\f{\mu-4}{\mu}uu_{xx}-\f{\mu-4}{2\mu}u_{x}^{2}\right].$$

Then $C=(C^{0},C^{1})$ provides a conserved vector for the generalized Korteweg--de Vries equation (\ref{4.2.2}), where 
$$
C^{0}=u^{2},\,\,\,\,C^{1}=u_{x}^{2}-2uu_{xx}-\f{2}{\mu+2}u^{\mu+2}.
$$

In particular, whenever $\mu=1$, $C=(u^{2},u_{x}^{2}-2uu_{xx}-\f{2}{3}u^{3})$ is another conserved vector for the KdV equation (\ref{4.2.1}), see \cite{ib2}.

Choosing $\mu=2$, then $C=(u^{2},u_{x}^{2}-2uu_{xx}-\f{1}{2}u^{4})$ is a conserved vector for the modified KdV equation
$$u_{t}=u_{xxx}+u^{2}u_{x}.$$

\section{Self-adjoint adjoint equations of fourth-order}\label{comment}


Concerning equation (\ref{1.1.3}), in \cite{ib1} is proved that equation $(\ref{1.1.3})$ is self-adjoint if and only if 
\bb\label{5.1.1}
g=h+\f{1}{u}(uf)',\,\,\,\,\,\,d=\f{c_{1}}{u}+\f{1}{u}(uh)'
\ee
and
\bb\label{5.1.2}
q=\f{1}{u}[c_{2}+(up)'],
\ee
where $f,\,h$ and $p$ are arbitrary functions of $u$ (see \cite{ib1}, Theorem 3.2, p.p 310).

From Theorem \ref{teo1} we conclude that equations (\ref{5.1.2}) and (\ref{3.1.2}) cannot be compatible whenever $c_{2}\neq 0$. In fact, the correct statement is
\begin{theorem}\label{teo5.2}
Equation $(\ref{1.1.3})$ is self-adjoint if and only if $g$ and $d$ are given by $(\ref{5.1.1})$
and $q$ is given by $(\ref{3.1.2})$, where $f,\,h$ and $p$ are arbitrary functions of $u$ and $c_{1}$ is an arbitrary constant.
\end{theorem}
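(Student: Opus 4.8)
The plan is to repeat, for the fourth-order density $F$ equal to the left-hand side of $(\ref{1.1.3})$, the same Euler--Lagrange computation that produced Theorem \ref{teo1}, and then to exploit a scaling grading that decouples the lower-order coefficients $p,q$ from the higher-order ones $f,g,h,d$. First I would form ${\cal L}=vF$ and apply the Euler--Lagrange operator $(\ref{2.1.4})$ to obtain $F^{\ast}=\f{\de}{\de u}(vF)$, now carrying the four pieces $D_{x}^{4}(vf)$, $-D_{x}^{3}(vgu_{x})$, $D_{x}^{2}(2vhu_{xx}+vdu_{x}^{2}-vp)$ and $-D_{x}(vgu_{xxx}+2vdu_{x}u_{xx}-2vqu_{x})$, in addition to the algebraic part $v\,\p F/\p u$ and the term $-D_{t}(v)$. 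Setting $v=u$ and imposing the self-adjointness condition $(\ref{ast})$, the coefficient of $u_{t}$ (the only weight-$0$ monomial) forces $\phi=-1$, exactly as for $(\ref{1.1.1})$.

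The key observation is the grading under $x\mapsto\la x$: a monomial $u_{x\cdots x}$ with $k$ subscripts carries weight $k$, the operator $D_{x}$ raises weight by one, and $\p/\p u_{x\cdots x}$ with $k$ subscripts lowers it by $k$, so every contribution of a given monomial of $F$ to $\left.F^{\ast}\right|_{v=u}$ retains that monomial's weight; the substitution $v=u$ preserves weight because $D_{x}^{k}v=u_{x\cdots x}$ has weight $k$. Consequently $\left.F^{\ast}\right|_{v=u}=-F$ splits sector by sector: the weight-$4$ part involves only $f,g,h,d$, while the weight-$2$ part involves only $p$ and $q$. Since the weight-$2$ part of $F$ in $(\ref{1.1.3})$ is precisely $-p(u)u_{xx}-q(u)u_{x}^{2}$, identical to that of $F$ in $(\ref{2.1.1})$, the weight-$2$ matching is word for word the $(p,q)$ subsystem of $(\ref{3.1.1})$: the $u_{xx}$-coefficient relation $-up'-p=-uq$ together with the $u_{x}^{2}$-coefficient relation $uq'-2p'-up''+q=0$. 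Solving these yields $q=\f{(up)'}{u}$, that is $(\ref{3.1.2})$, with no free constant.

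This last point is exactly where the correction to $(\ref{5.1.2})$ lives, and I expect it to be the conceptual crux rather than the algebra. The $u_{xx}$-coefficient gives the algebraic relation $uq=(up)'$, which pins the integration constant to zero, whereas the $u_{x}^{2}$-coefficient alone is the first-order linear equation $(uq)'=2p'+up''$ whose general solution is $uq=(up)'+c_{2}$; imposing only the latter produces the spurious constant $c_{2}$ of $(\ref{5.1.2})$, while imposing both forces $c_{2}=0$. Finally I would carry out the weight-$4$ matching, collecting the coefficients of $u_{xxxx}$, $u_{x}u_{xxx}$, $u_{xx}^{2}$ and $u_{x}^{2}u_{xx}$; this is the genuinely laborious step and, being unchanged from \cite{ib1}, reproduces $(\ref{5.1.1})$, with the arbitrary constant $c_{1}$ entering through the homogeneous solution of the equation for $d$. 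Assembling the weight-$2$ and weight-$4$ conclusions, together with the equivalence inherent in coefficient matching, gives the stated characterization.
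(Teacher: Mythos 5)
Your proposal is correct and is essentially the paper's own proof: your weight-2 and weight-4 coefficient equations are exactly the paper's determining system (\ref{5.1.3})--(\ref{5.1.8}), and your identification of the crux --- the algebraic $u_{xx}$-relation (\ref{5.1.6}) forcing $c_{2}=0$ in the general solution (\ref{5.1.2}) of the differentiated relation (\ref{5.1.7}) --- is precisely how the paper concludes, with the higher-order sector likewise deferred to the computation of \cite{ib1}. The scaling grading is a pleasant organizing gloss, but it does not change the substance of the argument.
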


\begin{proof}
From the self-adjointness condition (\ref{ast}) we obtain the following system of equations
\bb\label{5.1.3}
(uf)'-ug+uh=0,
\ee
\bb\label{5.1.4}
(uf)''-(ug)'+(uh)'=0,
\ee
\bb\label{5.1.5}
3(uf)'''-3(ug)''+(uh)''+2(ud)'=0,
\ee
\bb\label{5.1.6}
(up)'-uq=0,
\ee
\bb\label{5.1.7}
(up)''-(uq)'=0
\ee
and
\bb\label{5.1.8}
(uf)''''-(ug)'''+(ud)''=0.
\ee

From (\ref{5.1.3}) and (\ref{5.1.5}) we obtain (\ref{5.1.1}). Equation (\ref{5.1.4}) is a consequence of (\ref{5.1.3}). Equation (\ref{5.1.8}) is a consequence of (\ref{5.1.3}) and (\ref{5.1.5}). 

From (\ref{5.1.7}) we obtain (\ref{5.1.2}). However, substituting (\ref{5.1.2}) into (\ref{5.1.6}) we conclude that $c_{2}=0$. Thus we obtain (\ref{3.1.2}).
\end{proof}

From theorems \ref{teo5.2} and \ref{teo1}, we have the following generalization for Theorem \ref{teo5.2} (and Theorem 3.2 of \cite{ib1}):
\begin{theorem}\label{teo5.3}
Equation
\bb\label{5.1.9}
\ba{l}
u_{t}+f(u)u_{xxxx}+g(u)u_{x}u_{xxx}-r(u)u_{xxx}+h(u)u_{xx}^{2}\\
\\
+d(u)u_{x}^{2}u_{xx}-p(u)u_{xx}-q(u)u_{x}^{2}-a(u)u_{x}+b(u)=0
\ea
\ee
is self-adjoint if and only if $g$ and $d$ are given by $(\ref{5.1.1})$, $r,\,q$ and $b$ are given by $(\ref{3.1.1'})$, $(\ref{3.1.2})$ and $(\ref{3.1.3})$, respectively, where $a_{1},\,a_{2},\,a_{3}$ and $c_{1}$ are arbitrary constants and $f,\,h$ and $p$ are arbitrary functions of $u$.
\end{theorem}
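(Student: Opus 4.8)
The plan is to reduce the statement to the two results already established, namely Theorem \ref{teo1} and Theorem \ref{teo5.2}, by exploiting the linearity of the Euler--Lagrange operator together with a grading of differential functions. First I would write the differential function of (\ref{5.1.9}) as $F=F_4+F_3$, where $F_4=u_t+f u_{xxxx}+g u_x u_{xxx}+h u_{xx}^2+d u_x^2 u_{xx}-p u_{xx}-q u_x^2$ is exactly the differential function of the fourth-order equation (\ref{1.1.3}) analysed in \cite{ib1}, and $F_3=-r u_{xxx}-a u_x+b$ gathers the new dispersive, convective and source terms. Since $\f{\de}{\de u}$ is linear, $F^{\ast}=\f{\de}{\de u}(vF)=F_4^{\ast}+F_3^{\ast}$, and matching the coefficient of $u_t$ exactly as before gives $\phi=-1$; thus self-adjointness amounts to the single identity $\left.F^{\ast}\right|_{v=u}=-F$.

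The decisive step is a weight argument that decouples this identity. I would assign to $u_x,u_{xx},u_{xxx},u_{xxxx}$ the weights $1,2,3,4$ and to every function of $u$ the weight $0$, giving each monomial in the $x$-derivatives of $u$ an additive weight. Because $D_x$ raises the weight by one while differentiation with respect to a derivative of order $j$ lowers it by $j$, the operator (\ref{2.1.4}) preserves the weight, and the substitution $v=u$ leaves it unchanged; hence $\left.F^{\ast}\right|_{v=u}=-F$ holds weight by weight. Now $F_4$ carries only weights $4$ (its genuinely fourth-order terms) and $2$ (the terms $-p u_{xx}-q u_x^2$), whereas $F_3$ carries only weights $3$, $1$ and $0$. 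Consequently the coefficient functions $f,g,h,d$ never interact with $r,a,b$, and the whole system splits into independent blocks.

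Reading off the blocks then yields the conclusion. The weight-$4$ equations reproduce verbatim the subsystem (\ref{5.1.3}), (\ref{5.1.4}), (\ref{5.1.5}), (\ref{5.1.8}) solved in Theorem \ref{teo5.2}, giving (\ref{5.1.1}) for $g$ and $d$ with $c_1$ arbitrary. The weight-$2$ equations reproduce (\ref{5.1.6})--(\ref{5.1.7}) and force $q=(up)'/u$ as in (\ref{3.1.2}). The weight-$3$ equations reproduce the $r$-part of the system (\ref{3.1.1}) and give $r=a_1+a_2/u$ as in (\ref{3.1.1'}), while the weight-$0$ equation is $ub'+b=0$, hence $b=a_3/u$ as in (\ref{3.1.3}). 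The weight-$1$ part is satisfied identically, since the adjoint of $-a u_x$ returns precisely $a u_x$ after setting $v=u$; therefore $a$, together with $f$, $h$ and $p$, remains an arbitrary function of $u$. The converse direction is immediate, as any equation obeying these relations satisfies every graded condition.

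The step I expect to need the most care is justifying that there is genuinely no cross-coupling: a priori the adjoint of the fourth-order block could contaminate the weight-$2$ ($p,q$) or weight-$3$ ($r$) conditions, which would entangle $f,g,h,d$ with $r,p,q$ and destroy the clean combination of the two earlier theorems. The weight grading is exactly what rules this out, because weight is preserved and $F_4$ has no weight-$3$ term while $F_3$ has no weight-$4$ or weight-$2$ term. If one preferred to avoid the grading, the alternative is to compute $\left.F^{\ast}\right|_{v=u}$ for (\ref{5.1.9}) in full and observe by inspection that the monomials produced by the fourth-order terms are disjoint from those produced by $r$, $a$ and $b$; this is more laborious but leads to the same decoupled system.
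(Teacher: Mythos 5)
Your proposal is correct, and at the strategic level it coincides with the paper's: Theorem \ref{teo5.3} is obtained by splitting (\ref{5.1.9}) into the fourth-order block of (\ref{1.1.3}) and the extra dispersive/convective/source block coming from (\ref{1.1.1}), and then invoking Theorems \ref{teo5.2} and \ref{teo1}. The real difference is that the paper offers no actual proof of this theorem --- it is stated as a consequence of Theorems \ref{teo5.2} and \ref{teo1} with no justification of why the two sets of self-adjointness conditions can be merged without interfering with each other. Your weight-grading argument (weight $j$ for $u_{jx}$ and $v_{jx}$, weight $0$ for functions of $u$; $D_x$ raises the weight by one, $\p/\p u_{jx}$ lowers it by $j$, so the Euler--Lagrange operator (\ref{2.1.4}) and the substitution $v=u$ preserve the grading) is precisely the missing lemma: it shows that the single identity $\left.F^{\ast}\right|_{v=u}=-F$ splits into independent blocks, the weight-$4$ and weight-$2$ blocks reproducing the system (\ref{5.1.3})--(\ref{5.1.8}) of Theorem \ref{teo5.2}, and the weight-$3$, $1$, $0$ blocks reproducing the $r$-, $a$- and $b$-parts of (\ref{3.1.1}), whence (\ref{5.1.1}), (\ref{3.1.1'}), (\ref{3.1.2}) and (\ref{3.1.3}). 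Two small points are worth recording if you write this up: (i) the monomial $vu_{t}$ sits outside the $x$-derivative grading, but it decouples trivially, being the only term containing a $t$-derivative, and its adjoint contribution $-v_{t}$ is what fixes $\phi=-1$; (ii) the source term enters (\ref{5.1.9}) as $+b$ whereas the differential function of (\ref{1.1.1}) carries $-b$, but the weight-$0$ condition $ub'+b=0$ is insensitive to this sign, so (\ref{3.1.3}) is unaffected. With these observations your argument is complete and, in fact, supplies the rigor that the paper's one-line derivation of Theorem \ref{teo5.3} leaves implicit.
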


\section{Conclusion}

In this paper the self-adjoint subclasses of equation (\ref{1.1.1}) was obtained. Thanks to the recently proposed  conservation theorem from Ibragimov, some conservation laws of particular self-adjoint equations type (\ref{3.2.1}) were established. Further examples can be found in \cite{ib1, ib2,igor1, ya}.

A comment in a recently published result (see \cite{ib1}, Theorem 3.2) was given in section \ref{comment}. In particular the self-adjointness condition obtained by Bruzón, Gandarias and Ibragimov to equation (\ref{1.1.3}) was generalized to equation (\ref{5.1.9}). Equation (\ref{5.1.9}) covers a wider list of equations, for instance, all equations mentioned in the present paper, the thin film equation and so on, see \cite{qu, ib1}.

The main results are summarized by Theorem \ref{teo1} and Theorem \ref{teo5.3}. In particular, by using Theorem \ref{teo5.3} and the new conservation theorem presented in \cite{ib2}, conservation laws for lubrication equation, Korteweg--de Vries and inviscid Burgers equation, among others, can be established, as pointed out in \cite{ib1, igor1, ya}.

\end{document}